\newtheorem{theorem}{Theorem}%[section]
\newtheorem{corollary}[theorem]{Corollary}
\theoremstyle{definition}
\newtheorem{definition}[theorem]{Definition}
\newtheorem{example}[theorem]{Example}
\theoremstyle{remark}
\newtheorem{remark}[theorem]{Remark}
\begin{document}

\title{Moser stability for locally conformally symplectic structures}

\author{G.~Bande}
\address{Dipartimento di Matematica e Informatica, Universit\`a degli Studi di Cagliari, Via Ospedale
72, 09124 Cagliari, Italy}
\email{gbande{\char'100}unica.it}
\author{D.~Kotschick}
\address{Mathematisches Institut, Ludwig-Maximilians-Universit\"at M\"unchen,
Theresienstr.~39, 80333 M\"unchen, Germany}
\email{dieter{\char'100}member.ams.org}

%\keywords{}

\thanks{This work was carried out while the second author was a Visiting
Professor at the Universit\`a degli Studi di Cagliari, supported
by the Regione Autonoma della Sardegna}
\date{\today;  \copyright{\ G.~Bande and D.~Kotschick 2008}}
\subjclass[2000]{Primary 57R17; Secondary 58H15}

\begin{abstract}
We formulate and prove the analogue of Moser's stability theorem \cite{Moser} for locally conformally symplectic structures.
As special cases we recover some results previously proved by Banyaga \cite{banya}.
\end{abstract}

\maketitle

\section{Introduction}

In this paper we prove a version of the Moser stability theorem for locally conformally symplectic structures. These
structures were introduced by Lee \cite{lee} and Vaisman \cite{V}, and have been studied extensively by Vaisman,
Banyaga and many others. We refer the reader to \cite{banya,banya2,DO,HR,lee,V} and to the references given there
for a more thorough discussion.

A locally conformally symplectic or lcs form on a manifold $M$ is a non-degenerate $2$-form $\omega$ which is locally
conformal to a symplectic form. More formally:
\begin{definition}
A non-degenerate $2$-form $\omega$ on a manifold $M$ is said to be
locally conformally symplectic if there exists an open
covering $\{ U_i\}$ of $M$ and a smooth positive function $f_i$ on
each $U_i$ such that $f_i \omega\vert_ {U_i}$ is symplectic on $U_i$.
\end{definition}
It is straightforward to see, and was first observed by Lee \cite{lee}, that this is equivalent to the existence of a
closed $1$-form $\theta$ such that
\begin{equation}\label{eqLee}
d\omega= \theta \wedge \omega .
\end{equation}
We assume throughout that the dimension of $M$ is at least $4$.
Then the $1$-form $\theta$, called the Lee form of $\omega$,
is uniquely determined by $\omega$ because the wedge product with a non-degenerate $2$-form is
injective on $1$-forms. When $\theta$ vanishes identically,
the form $\omega$ is symplectic.

Two lcs forms $\omega$ and $\omega'$ are said to be (conformally) equivalent if there
exists some positive function $f$ such that $\omega = f \omega'$. A locally conformally symplectic \textit{structure} is an equivalence class
of lcs forms for this relation. Note that the de Rham
cohomology class of the Lee form is an invariant of the lcs structure because a conformal rescaling
of $\omega$ changes $\theta$ by the addition of an exact form.

If an lcs structure contains a symplectic representative, then the
structure is \textit{globally conformally symplectic}. This is the case if and only if the Lee form is exact.

The purpose of this paper is to give a necessary and sufficient condition for the existence of an isotopy making a smooth family of lcs structures
constant. This result, Theorem~\ref{t:Moser} proved in Section~\ref{secstab}, is the lcs analogue of Moser's theorem \cite{Moser} for
symplectic forms. However, for the conformally invariant notion of lcs forms, the appropriate formulation is not for an isotopy of forms, but for
an isotopy of their conformal equivalence classes. We will show that Theorem~\ref{t:Moser} implies as special cases some results
of Banyaga \cite{banya} giving sufficient conditions for the existence of an isotopy of certain families of lcs structures. Although we prove
a more general result than Banyaga \cite{banya}, our proofs are simpler, even for the special cases he considered; compare
Section~\ref{secC} below.

The proof of Theorem~\ref{t:Moser} involves Hodge theory for the Lichnerowicz cohomology, which we review in Section~\ref{secL}.
As a byproduct we give an answer to a question raised by Banyaga in \cite{banya2}.

To end this introduction, let us remark on the notation we use. There are a number of conventions, especially concerning signs, involved
in the definition of the Lee form and in the discussion of Lichnerowicz cohomology. We use \eqref{eqLee} as our definition of the Lee form,
and we have chosen the sign conventions for the Lichnerowicz cohomology in such a way that they fit conveniently with \eqref{eqLee}.

\section{Lichnerowicz cohomology}\label{secL}

An important tool in the study of lcs structures is the $d_\theta$-cohomology introduced in \cite{Lichne}, compare also \cite{banya2,DO,HR}.

Let $M$ be a smooth manifold, and $\theta$ a closed $1$-form on $M$. One defines a first order differential operator $d_{\theta}$ as follows:
\begin{eqnarray}
d_{\theta} \beta= d\beta - \theta \wedge \beta \, ,
\end{eqnarray}
where $\beta$ is any differential form. It is straightforward to verify that $d_{\theta}$ squares to zero, so that one obtains a modified
de~Rham complex $(\Omega^*(M),d_{\theta})$. Its cohomology vector spaces $H_{\theta}^* (M)$ are
called the $d_{\theta}$-cohomology, or Lichnerowicz cohomology of $M$ with respect to $\theta$. This only
depends on the de Rham cohomology class of $\theta$, for if $\theta'=\theta + d\ln f$ for some positive function $f$, then
he formula
\begin{equation}\label{chain}
f d_{\theta}\beta = d_{\theta+d\ln f}(f\beta)
\end{equation}
shows that multiplication by $f$ is a chain map between $(\Omega^*(M),d_{\theta})$ and $(\Omega^*(M),d_{\theta'})$ inducing an isomorphism in
cohomology.

In the case when $\theta$ is the Lee form of an lcs form $\omega$, equation~\eqref{eqLee} shows that $\omega$ is $d_{\theta}$-closed, and
so defines a class in $H_{\theta}^2 (M)$. If we consider the lcs structure defined by $\omega$, and $\omega' = f \omega$, then the Lee form of
$\omega'$ is just $\theta'=\theta + d\ln f$, and the class $[\omega]\in H_{\theta}^2 (M)$ is mapped to $[\omega']\in H_{\theta'}^2 (M)$ by the
above isomorphism.

The Lichnerowicz  cohomology shares many properties with the ordinary de Rham cohomology, see for example \cite{Lichne, HR}.
For our purposes it is useful that Hodge theory applies to the $d_{\theta}$-cohomology.

Let us assume that $M$ is closed and oriented. Then the modified de~Rham complex $(\Omega^*(M),d_{\theta})$ is an elliptic
complex. In particular, its cohomology is finite-dimensional. If we equip $M$ with an arbitrary Riemannian metric $g$, then we can
define an operator $d_{\theta}^*$ as the formal $L^2$-adjoint of $d_{\theta}$ with respect to $g$. Further,
$\Delta_{\theta}= d_{\theta}d_{\theta}^*+d_{\theta}^*d_{\theta}$ is the corresponding Laplacian. These operators are lower-order perturbations of the
corresponding operators in the usual Hodge--de~Rham theory (corresponding to $\theta =0$), and therefore have much the same analytic properties.
For example, the usual proof of the Hodge decomposition theorem, see e.g. \cite{Warner}, goes through, and one obtains
an orthogonal decomposition
$$
\Omega^k(M) = \mathcal{H}^k(M)\oplus d_{\theta}(\Omega^{k-1}(M))\oplus d_{\theta}^*(\Omega^{k+1}(M)) \ ,
$$
where $\mathcal{H}^k(M)$, the space of $\Delta_{\theta}$-harmonic forms, is isomorphic to $H_{\theta}^k (M)$.

Any $d_{\theta}$-exact $k$-form is contained in $d_{\theta}(\Omega^{k-1}(M))$, which is in fact equal to $d_{\theta}d_{\theta}^*(\Omega^{k}(M))$.
Integration by parts shows that
$$
d_{\theta}\colon d_{\theta}^*(\Omega^{k}(M))\longrightarrow \Omega^{k}(M)
$$
is injective, so that every $d_{\theta}$-exact form has a unique primitive in the image of $d_{\theta}^*$.

Finally, note that on a closed oriented manifold the index of the elliptic complex $(\Omega^*(M),d_{\theta})$ is determined, via the
Atiyah--Singer index theorem, by its symbol sequence, which is independent of $\theta$. Therefore, the Euler characteristic of the
Lichnerowicz  cohomology coincides with the usual Euler characteristic.

\begin{example}
Banyaga \cite{banya2} considers the Lichnerowicz cohomology on a certain $4$-manifold $M$ of Euler characteristic
zero. For a particular closed one-form $\theta$ he shows that the dimensions of $H^i_{\theta}(M)$ are at least
one for $i= 1,2,3$, and asks whether the dimensions might be exactly one, see Question~3 on page~5 of \cite{banya2}.

In this case the one-form $\theta$ is not exact, and so a result of \cite{Lichne,HR} shows that $H^i_{\theta}(M)$ vanishes
for $i=0$ and $i=4$. Therefore the vanishing of the Euler characteristic implies that  $H^2_{\theta}(M)$ is at least $2$-dimensional,
giving a negative answer to Banyaga's question.
\end{example}

\section{Moser stability}\label{secstab}

In this section we consider families $\omega_t$ of locally conformally symplectic forms depending smoothly on a parameter
$t\in [0,1]$. The uniqueness of the Lee form $\theta_t$ implies that this depends smoothly on $t$ as well.

Recall that Moser's stability theorem \cite{Moser} says that if the $\omega_t$ are actually symplectic, then they are isotopic
as forms if and only if their de~Rham cohomology class is independent of $t$. Now one expects to transpose this statement to the lcs
category by replacing the de~Rham cohomology by the Lichnerowicz cohomology. This is slightly complicated for two reasons. First of
all, the Lichnerowicz cohomology depends on $\theta_t$, which is not necessarily fixed, but varies with $t$. Thus the cohomology one
needs to consider also varies with $t$. Second of all, we would like to have a conformally invariant statement, which holds for
lcs structures rather than forms. This explains why the following theorem looks more complicated than Moser's.

\begin{theorem}\label{t:Moser}
Let $\omega_t$ be a family of locally conformally symplectic forms on a closed manifold $M$, depending smoothly on $t\in [0,1]$.
Denote by $\theta_t$ the Lee form of $\omega_t$.

There exists an isotopy $\phi_t$ with $\phi_t^*\omega_t$ conformally equivalent to $\omega_0$ for all $t$ if and only if there are 
positive smooth functions $f_t$ on $M$, varying smoothly with $t$, such that the time derivative $\frac{d}{dt}(f_t\omega_t)$ of the 
conformally rescaled family $f_t\omega_t$ is $d_{\theta_t'}$-exact for every $t$, where $\theta_t'=\theta_t+d\ln f_t$ is the Lee form 
of $f_t\omega_t$.
\end{theorem}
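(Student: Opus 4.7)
The plan is to adapt Moser's original argument, using as the substitute for $\mathcal{L}_X\omega=d\iota_X\omega$ the identity
\[
\mathcal{L}_X\omega=(\iota_X\theta)\,\omega+d_\theta(\iota_X\omega),
\]
valid for any lcs form $\omega$ with Lee form $\theta$; it follows from Cartan's magic formula combined with \eqref{eqLee} and the definition of $d_\theta$. The extra zero-order term $(\iota_X\theta)\omega$ is precisely what forces conformal equivalence (rather than equality of pullbacks) to appear, and it explains why the hypothesis in the theorem involves the freedom to choose a rescaling function $f_t$.

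For the implication from $d_{\theta_t'}$-exactness to the existence of an isotopy, suppose $\frac{d}{dt}(f_t\omega_t)=d_{\theta_t'}\beta_t$ holds for a smooth family of $1$-forms $\beta_t$. Setting $\tilde\omega_t=f_t\omega_t$, I would define $X_t$ by $\iota_{X_t}\tilde\omega_t=-\beta_t$, which is possible and unique by non-degeneracy, and let $\phi_t$ be its flow. The identity above yields
\[
\frac{d}{dt}\bigl(\phi_t^*\tilde\omega_t\bigr)=\phi_t^*\bigl((\iota_{X_t}\theta_t')\,\tilde\omega_t\bigr)=\bigl((\iota_{X_t}\theta_t')\circ\phi_t\bigr)\,\phi_t^*\tilde\omega_t,
\]
a scalar ODE in $t$ whose solution is a positive multiple of $\tilde\omega_0=f_0\omega_0$. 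Dividing by $\phi_t^*f_t$ shows $\phi_t^*\omega_t$ is conformally equivalent to $\omega_0$.

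For the reverse implication, let $\phi_t$ be an isotopy with $\phi_t^*\omega_t=g_t\omega_0$ and let $X_t$ be its generating vector field. Differentiating in $t$ and applying the key identity produces
\[
\tfrac{d}{dt}\omega_t=\mu_t\,\omega_t-d_{\theta_t}\bigl(\iota_{X_t}\omega_t\bigr)
\]
for an explicit function $\mu_t$ built from $g_t$, $X_t$, and $\theta_t$. I would then solve the pointwise scalar ODE $\dot f_t+f_t\mu_t=0$ with $f_0=1$, giving a smooth positive $f_t$. The chain-map relation \eqref{chain} converts the result into $\frac{d}{dt}(f_t\omega_t)=-d_{\theta_t'}(f_t\iota_{X_t}\omega_t)$, which is the required $d_{\theta_t'}$-exactness.

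The step I expect to require the most care is not either Moser argument itself but the smoothness in $t$ of the primitive $\beta_t$ in the first direction: the hypothesis only asserts exactness for each fixed $t$. This is where the Hodge theory of Section~\ref{secL} is essential. For each $t$, every $d_{\theta_t'}$-exact form admits a unique primitive in the image of $d_{\theta_t'}^*$, obtainable by applying the Green's operator of $\Delta_{\theta_t'}$. Since the family $\Delta_{\theta_t'}$ consists of smoothly varying elliptic operators, standard results on parameter-dependent pseudodifferential inverses show that its Green's operator varies smoothly, and hence so does $\beta_t$.
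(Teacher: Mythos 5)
Your proposal is correct and follows essentially the same route as the paper: the identity $L_X\omega=\theta(X)\,\omega+d_\theta(i_X\omega)$, the Moser vector field defined by contracting against the (rescaled) form, the conformal factor obtained by integrating a pointwise scalar ODE, and Hodge theory for the Lichnerowicz Laplacian to get a smoothly varying primitive. The only difference is cosmetic: the paper normalizes first (rescaling so that $\phi_t^*\omega_t=\omega_0$, resp.\ so that $f_t\equiv 1$), whereas you carry the conformal factors $f_t$, $g_t$ along explicitly.
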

\begin{proof}
First suppose that there is an isotopy $\phi_t$ so that $\phi_t^*\omega_t$ is conformally equivalent to $\omega_0$ for all $t$. After
rescaling the $\omega_t$ suitably, we may assume $\phi_t^*\omega_t=\omega_0$. Let $X_t$ be the time-dependent vector field obtained
by differentiating $\phi_t$.
By the  Cartan formula we have:
\begin{alignat*}{1}
0 &=\frac{d}{dt}(\phi_t^*\omega_t) = \phi_t^*(\dot\omega_t+L_{X_t}\omega_t)\\
&=\phi_t^*(\dot\omega_t+di_{X_t}\omega_t+i_{X_t}d\omega_t)\\
&=\phi_t^*(\dot\omega_t+di_{X_t}\omega_t+i_{X_t}(\omega_t\wedge\theta_t))\\
&=\phi_t^*(\dot\omega_t+di_{X_t}\omega_t-\theta_t\wedge (i_{X_t}\omega_t)+\theta_t(X_t)\omega_t) \ .
\end{alignat*}
We conclude that
\begin{equation}\label{eq1}
\dot\omega_t =   -d_{\theta_t}( i_{X_t}\omega_t) -\theta_t(X_t)\omega_t \ .
\end{equation}
Now consider the  family $f_t\omega_t$ with
$$
f_t = \exp \left( \int_0^t \theta_s(X_s) ds \right) \ .
$$
This satisfies
\begin{alignat*}{1}
\frac{d}{dt} (f_t\omega_t) &= \dot f_t \omega_t + f_t \dot\omega_t\\
&=f_t\theta_t(X_t)\omega_t-f_t(d_{\theta_t}( i_{X_t}\omega_t) +\theta_t(X_t)\omega_t )\\
&=-f_t d_{\theta_t}( i_{X_t}\omega_t)\\
& = -d_{\theta_t+d\ln f_t}(f_t i_{X_t}\omega_t) \ ,
\end{alignat*}
where we first used \eqref{eq1}, and then \eqref{chain}. Thus $\frac{d}{dt} (f_t\omega_t)$ is indeed $d_{\theta_t'}$-exact,
where $\theta_t'=\theta_t+d\ln f_t$ is the Lee form of $f_t\omega_t$.

For the converse assume that we have  rescaled $\omega_t$ in such a way that $\dot\omega_t$ is $d_{\theta_t}$-exact for every $t$.
As explained in Section~\ref{secL}, by Hodge theory for the Lichnerowicz cohomology, there is a unique $\alpha_t$ in the image of $d_{\theta_t}^*$ with the property
that $d_{\theta_t}\alpha_t=\dot\omega_t$. As $\dot\omega_t$ depends smoothly on $t$, and $\alpha_t$ is specified uniquely for every $t$,
it follows that $\alpha_t$ also depends smoothly on $t$. We define a time-dependent vector field $X_t$ by the requirement that
$$
i_{X_t}\omega_t = -\alpha_t \ .
$$
This exists and is unique because $\omega_t$ is non-degenerate. As both $\omega_t$ and $\alpha_t$ depend smoothly on $t$,
so does $X_t$. Let $\phi_t$ be its flow. Now the same calculation as before yields
\begin{alignat*}{1}
\frac{d}{dt}(\phi_t^*\omega_t) &= \phi_t^*(\dot\omega_t+L_{X_t}\omega_t)\\
%&=\phi_t^*(\dot\omega_t+di_{X_t}\omega_t+i_{X_t}d\omega_t)\\
%&=\phi_t^*(\dot\omega_t+di_{X_t}\omega_t+i_{X_t}(\omega_t\wedge\theta_t))\\
&=\phi_t^*(\dot\omega_t+di_{X_t}\omega_t-\theta_t\wedge (i_{X_t}\omega_t)+\theta_t(X_t)\omega_t)\\
&=\phi_t^*(\dot\omega_t-d_{\theta_t}\alpha_t +\theta_t(X_t)\omega_t)\\
&=\phi_t^*(\theta_t(X_t)) \cdot \phi_t^*\omega_t \ .
\end{alignat*}
It follows that
$$
\phi_t^*\omega_t =  \exp \left(\int_0^t \phi_s^*(\theta_s(X_s))ds\right) \cdot \omega_0 \ .
$$
This completes the proof.
\end{proof}
Note that in the second part of the proof we produced an isotopy $\phi_t$ between the conformal equivalence classes of the $\omega_t$.
As the de~Rham cohomology class of the Lee form is conformally invariant, and $\phi_t$ acts trivially on de~Rham cohomology, we
conclude that the de~Rham cohomology class of the Lee forms $\theta_t$ is independent of $t$. We do not need to assume this in the theorem,
as the proof shows that it is implied by the assumption that $\frac{d}{dt}(f_t\omega_t)$ is $d_{\theta_t'}$-exact for all $t$.

If we assume that the Lee forms are independent of $t$, then Theorem~\ref{t:Moser} implies the following statement, first proved by
Banyaga (Theorem~4 of \cite{banya}):
\begin{corollary}
Let $\omega_t$ be a smooth family of lcs forms on a compact manifold $M$ having the same Lee form $\theta$. If $\omega_t - \omega _0$ is
 $d_{\theta}$-exact for all $t$, then there exist a family of functions $f_t$ and an isotopy $\phi_t$ such that $\phi_t^* (\omega_t)= f_t \omega_0$.
\end{corollary}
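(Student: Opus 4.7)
The plan is to deduce the corollary immediately from Theorem~\ref{t:Moser} applied with the trivial choice $f_t \equiv 1$. Since the Lee forms satisfy $\theta_t=\theta$ for every $t$, this choice yields $\theta_t'=\theta$ as well, and the hypothesis of the theorem reduces to the single condition that $\dot\omega_t$ is $d_\theta$-exact for every $t$. Once this is verified, the theorem produces an isotopy $\phi_t$ with $\phi_t^*\omega_t$ conformally equivalent to $\omega_0$, which is exactly the assertion $\phi_t^*\omega_t=f_t\omega_0$ of the corollary.

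To verify that $\dot\omega_t$ is $d_\theta$-exact, I would invoke the Hodge theory for the Lichnerowicz cohomology discussed in Section~\ref{secL}. The hypothesis says that $\omega_t-\omega_0$ lies in the image of $d_\theta$ for each $t$, so by the orthogonal decomposition reviewed in that section it has a unique primitive in the image of $d_\theta^*$; concretely one can set
$$
\gamma_t := d_\theta^* G\,(\omega_t-\omega_0),
$$
where $G$ is the Green's operator associated with $\Delta_\theta$. Since $\omega_t-\omega_0$ has no $\Delta_\theta$-harmonic and no $d_\theta^*$-exact components, the Hodge decomposition gives $d_\theta\gamma_t=\omega_t-\omega_0$. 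Because $G$ and $d_\theta^*$ are continuous operators independent of $t$, the family $\gamma_t$ depends smoothly on $t$, and differentiating the identity $d_\theta\gamma_t=\omega_t-\omega_0$ yields $d_\theta\dot\gamma_t=\dot\omega_t$, so $\dot\omega_t$ is $d_\theta$-exact as required.

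With this verification in hand, Theorem~\ref{t:Moser} applied to the family $\omega_t$ (without rescaling) produces the desired isotopy $\phi_t$ and the positive functions $f_t$ such that $\phi_t^*\omega_t=f_t\omega_0$. The only real content in the argument is the smooth dependence of $\gamma_t$ on $t$, which I expect to be the main technical point; the Green's operator formulation above handles this cleanly because it provides a canonical smooth choice of primitive rather than relying on an ad hoc selection.
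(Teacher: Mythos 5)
Your argument is correct and follows the same route as the paper, which states this corollary as the immediate specialization of Theorem~\ref{t:Moser} to constant Lee form with $f_t\equiv 1$. Your Green's-operator verification that $\omega_t-\omega_0$ being $d_\theta$-exact forces $\dot\omega_t$ to be $d_\theta$-exact (with a smoothly varying primitive) correctly fills in the one step the paper leaves implicit, and is consistent with the Hodge-theoretic machinery of Section~\ref{secL}.
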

Another special case of Theorem~\ref{t:Moser} previously proved by Banyaga is the following (Theorem~5 of \cite{banya}):
\begin{corollary}\label{c2}
Let $\omega_t$ a smooth family of lcs forms on a compact manifold $M$ such that the corresponding Lee forms $\theta_t$ have the same de Rham cohomology
class. Suppose there exists a smooth family of $1$-forms $\alpha_t$ such that $\omega_t= d \alpha_t-\theta_t \wedge \alpha_t$. Then there exists an
isotopy $\phi_t$ such that $\phi_t^*\omega_t$ is conformally equivalent to $\omega_0$ for all $t$.
\end{corollary}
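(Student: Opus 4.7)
My approach is to reduce Corollary~\ref{c2} to Theorem~\ref{t:Moser} by choosing a conformal rescaling $f_t\omega_t$ whose Lee form is independent of $t$; the rescaled family will then visibly satisfy the required $d_{\theta_t'}$-exactness condition, and the theorem will supply the isotopy.

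The first step is to produce a smooth family of functions $g_t$ with $g_0=0$ and $\theta_t=\theta_0+dg_t$. Since the $\theta_t$ all lie in a single de~Rham class and vary smoothly with $t$, the form $\frac{d}{dt}\theta_t$ is closed and exact, and ordinary Hodge theory with respect to a fixed Riemannian metric produces a smooth family of primitives $h_t$ for it. One then takes $g_t=\int_0^t h_s\,ds$.

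Next, I would set $f_t=e^{-g_t}$, so that $d\ln f_t=-dg_t$ and consequently
$$
\theta_t+d\ln f_t=\theta_0 \ ,
$$
meaning that the Lee form of the rescaled family $f_t\omega_t$ is the constant form $\theta_0$. Combining the hypothesis $\omega_t=d_{\theta_t}\alpha_t$ with the chain formula~\eqref{chain} gives
$$
f_t\omega_t=f_t\,d_{\theta_t}\alpha_t=d_{\theta_t+d\ln f_t}(f_t\alpha_t)=d_{\theta_0}(f_t\alpha_t) \ .
$$
Since the operator $d_{\theta_0}$ does not depend on $t$, differentiating shows that $\frac{d}{dt}(f_t\omega_t)=d_{\theta_0}\bigl(\frac{d}{dt}(f_t\alpha_t)\bigr)$ is $d_{\theta_0}$-exact. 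With $\theta_t'=\theta_0$ this is exactly the hypothesis of Theorem~\ref{t:Moser}, whose conclusion is what is wanted.

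The only step that is not completely routine is the smooth choice of primitives $g_t$ in the first paragraph; this is a standard application of Hodge theory (smoothness in $t$ is preserved by the Green operator), and I expect it to be the main, though minor, obstacle. Beyond this point the argument is a direct computation using~\eqref{chain} followed by an appeal to the main theorem.
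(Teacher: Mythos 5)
Your proof is correct and essentially the same as the paper's: both rescale by $f_t=\exp\left(-\int_0^t h_s\,ds\right)$ where $dh_t=\dot\theta_t$, and then invoke the sufficiency direction of Theorem~\ref{t:Moser}. Your observation that this rescaling makes the Lee form of $f_t\omega_t$ constantly equal to $\theta_0$, so that $\frac{d}{dt}$ commutes with the fixed operator $d_{\theta_0}$, is a slightly cleaner route to the same explicit primitive $f_t(\dot\alpha_t-h_t\alpha_t)$ that the paper obtains by direct computation.
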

\begin{proof}
In this case we have
\begin{equation}\label{eq3}
\dot\omega_t= d \dot\alpha_t-\theta_t \wedge \dot\alpha_t -
\dot\theta_t\wedge\alpha_t = d_{\theta_t}\dot\alpha_t -
\dot\theta_t\wedge\alpha_t \ .
\end{equation}
Because of the second summand on the right-hand side, it is not obvious that Theorem~\ref{t:Moser} applies. However, here one has the additional
assumption that the de~Rham cohomology class of $\theta_t$ is independent of $t$. This means that $\dot\theta_t = dh_t$ for some smoothly varying
family of functions $h_t$. Let
$$
g_t = -\int_0^t h_s ds
$$
and $f_t = \exp g_t$. With these definitions we have
\begin{alignat*}{1}
\frac{d}{dt} (f_t\omega_t) &= \dot f_t \omega_t + f_t \dot\omega_t\\
&=f_t\dot g_t\omega_t+f_t(d_{\theta_t}\dot\alpha_t - \dot\theta_t\wedge\alpha_t)\\
&=f_t (-h_t(d \alpha_t-\theta_t \wedge \alpha_t)+d_{\theta_t}\dot\alpha_t - \dot\theta_t\wedge\alpha_t)\\
& = f_t d_{\theta_t}(-h_t\alpha_t+\dot\alpha_t)\\
&= d_{\theta_t+d\ln f_t}(f_t(-h_t\alpha_t+\dot\alpha_t)) \ ,
\end{alignat*}
where we used first \eqref{eq3}, then $\dot\theta_t=dh_t$, and finally \eqref{chain}.

We are now in a position to appeal to the sufficiency proof for the existence of the isotopy from (the proof of) Theorem~\ref{t:Moser}. However, as
we have an explicit smooth family of $d_{\theta_t+d\ln f_t}$-primitives for $\frac{d}{dt} (f_t\omega_t)$, Hodge theory is not needed.
\end{proof}

\begin{remark}
Instead of deducing Corollary~\ref{c2} from Theorem~\ref{t:Moser}, one can give a quick direct proof as follows.
Given a smooth family of functions $h_t$ such that $\dot\theta_t=d h_t$, one defines a time-dependent vector field $X_t$ by
\[
i_{X_t} \omega_t= -\dot\alpha_t + h_t \alpha_t \, .
\]
Then its flow $\phi_t$ satisfies
$$
\frac{d}{dt} (\phi_t^*\omega_t) =\phi_t^* ((\theta_t (X_t) +h_t) \omega_t)
$$
by the same kind of calculation as above, and so provides the desired isotopy.
\end{remark}

\section{Conclusion}\label{secC}

In this paper we have given a necessary and sufficient condition for the existence of an isotopy $\phi_t$ making a smooth family of
lcs structures constant. The necessity had not appeared in the literature before. Banyaga \cite{banya} had discussed special
cases of the sufficiency. Comparing our arguments with his, the main difference is that we work directly on a closed manifold $M$
and construct the desired isotopy by integrating a time-dependent vector field on $M$, whereas Banyaga \cite{banya}
works on a covering of $M$ which is no longer compact. The non-compactness leads to two complications. One is that completeness
of (time-dependent) vector fields is not always available, and care has to be taken to ensure it in the situation at hand. The other
complication is that Hodge theory is not available, and the only way to find smooth families of primitives for $d_{\theta}$-exact forms
is Grothendieck's theory of nuclear spaces.

\end{document}